\newtheorem{theorem}{Theorem}[section]
\newtheorem{corollary}[theorem]{Corollary}
\newtheorem{definition}[theorem]{Definition}
\newtheorem{example}[theorem]{Example}
\newtheorem{lemma}[theorem]{Lemma}
\newtheorem{proposition}[theorem]{Proposition}
\newtheorem{remark}[theorem]{Remark}
\newenvironment{proof}[1][Proof]{\noindent\textbf{#1.} }{\ \rule{0.5em}{0.5em}}
\begin{document}

\author{Yulia Kempner \\
%EndAName
Department of Computer Science\\
Holon Institute of Technology, Israel\\
yuliak@hit.ac.il \and Vadim E. Levit \\
%EndAName
Department of Computer Science \\
Ariel University, Israel\\
levitv@ariel.ac.il}
\date{}
\title{Cospanning characterizations of violator and co-violator spaces }
\maketitle

\begin{abstract}
Given a finite set $E$ and an operator $\sigma :2^{E}\longrightarrow 2^{E}$,
two subsets $X,Y\subseteq E$ are \textit{cospanning} if $\sigma (X)=\sigma
(Y)$ (Korte, Lovasz, Schrader; 1991). We investigate cospanning relations on
violator spaces. A notion of a violator space was introduced in (G\"{a}%
rtner, Matou\v{s}ek, R\"{u}st, \v{S}kovro\v{n}by; 2008) as a combinatorial
framework that encompasses linear programming and other geometric
optimization problems.

Violator spaces are defined by violator operators. We introduce \textit{%
co-violator spaces} based on contracting operators known also as choice
functions. Let $\alpha ,\beta :2^{E}\longrightarrow 2^{E}$ be a violator
operator and a co-violator operator, respectively. Cospanning
characterizations of violator spaces allow us to obtain some new properties
of violator operators, co-violator operators, and their interconnections. In
particular, we show that uniquely generated violator spaces enjoy so-called
Krein-Milman properties, i.e., $\alpha (\beta \left( X\right) )=\alpha (X)$
and $\beta \left( \alpha \left( X\right) \right) =\beta \left( X\right) $
for every $X\subseteq E$.

\textbf{Keywords: } cospanning relation, uniquely generated violator space,
co-violator space.
\end{abstract}

\section{Introduction}

Each set operator determines the partition of sets to equivalence classes
with equal value of the operator. Let us have some set operator $\alpha $.
Following \cite{Greedoids} we call two sets $X,Y$ \textit{cospanning} if $%
\alpha (X)=\alpha (Y)$. Thus each set operator generates the cospanning
equivalence relation on sets. Our goal is to investigate cospanning
relations on violator spaces. These spaces were introduced in order to
develop a combinatorial framework encompassing linear programming and other
geometric optimization problems \cite{VS}. Violator spaces are defined by
violator operators, which generalize closure operators \cite{KempnerLevit}.
We also pay special attention to violator spaces with unique bases. In
Section 2, we introduce co-violator spaces based on contracting operators
known also as choice functions. In Section 3, we characterize the cospanning
relation with regards to violator spaces and describe the equivalence
classes of the relation for violator and co-violator spaces. Cospanning
characterizations allow us to obtain some new properties of violator
operators, co-violator operators and their interconnections. In particular,
we show that uniquely generated violator spaces enjoy so-called Krein-Milman
properties.

\subsection{ Violator spaces}

Violator spaces are arisen as a generalization of Linear Programming
problems. LP-type problems have been introduced and analyzed by Matou\v{s}%
ek, Sharir and Welzl \cite{MSW, SW} as a combinatorial framework that
encompasses linear programming and other geometric optimization problems.
Further, Matou\v{s}ek et al. \cite{VS} define a simpler framework: violator
spaces, which constitute a proper generalization of LP-type problems.
Originally, violator spaces were defined for a set of constraints $E$, where
each subset of constraints $G\subseteq E$ was associated with $\nu (G)$ -
the set of all constraints violating $G$.

The classic example of an LP-type problem is the problem of computing the
smallest enclosing ball of a finite set of points in $%
%TCIMACRO{\U{211d} }%
%BeginExpansion
\mathbb{R}
%EndExpansion
^{d}$. Here $E$ is a set of points in $%
%TCIMACRO{\U{211d} }%
%BeginExpansion
\mathbb{R}
%EndExpansion
^{d}$, and the violated constraints of some subset of the points $G$ are
exactly the points lying outside the smallest enclosing ball of $G$.

\begin{definition}
\cite{VS} A \textit{violator space} is a pair $(E,\nu)$, where $E$ is a
finite set and $\nu$ is a mapping $2^{E}\rightarrow2^{E}$ such that for all
subsets $X,Y\subseteq E$ the following properties are satisfied:

\textbf{V11}: $X \cap\nu(X) = \emptyset$ (consistency),

\textbf{V22}: $(X\subseteq Y$ and $Y \cap\nu(X) = \emptyset) \Rightarrow
\nu(X)=\nu(Y)$ (locality).
\end{definition}

Let $(E,\nu)$ be a violator space. Define $\varphi(X)=E-\nu(X)$. In what
follows, if $(E,\nu)$ is a violator space and $\varphi(X)=E-\nu(X)$, then $%
(E, \varphi)$ will be called a violator space as well.

\begin{definition}
(\cite{KempnerLevit}) A \textit{violator space} is a pair $(E,\varphi)$,
where $E$ is a finite set and $\varphi$ is an operator $2^{E}%
\rightarrow2^{E} $ such that for all subsets $X,Y\subseteq E$ the following
properties are satisfied:

\textbf{V1}: $X\subseteq\varphi(X)$ (extensivity),

\textbf{V2}: $(X\subseteq Y\subseteq\varphi(X))\Rightarrow\varphi
(X)=\varphi(Y)$ (self-convexity).
\end{definition}

Each violator operator $\varphi$ is idempotent. Indeed, extensivity implies $%
X\subseteq\varphi(X)\subseteq\varphi(X)$. Then, by self-convexity, we
conclude with $\varphi(\varphi(X))=\varphi(X)$.

\begin{lemma}
(\cite{KempnerLevit}) \label{un} Let $(E,\varphi)$ be a violator space. Then 
\begin{equation}
\varphi(X)=\varphi(Y)\Rightarrow\varphi(X\cup Y)=\varphi(X)=\varphi(Y)
\label{Union}
\end{equation}
and 
\begin{equation}
(X \subseteq Y \subseteq Z) \wedge (\varphi(X)=\varphi(Z)) \Rightarrow
\varphi(X)=\varphi(Y)=\varphi(Z)  \label{Convexity}
\end{equation}
for every $X,Y,Z\subseteq E$.
\end{lemma}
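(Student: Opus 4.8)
The plan is to derive both implications directly from extensivity (V1) and self-convexity (V2), with no need for idempotence beyond what those two axioms already give. The guiding observation is that V2 is a statement about any set sandwiched between $X$ and $\varphi(X)$, so the whole game is to exhibit the relevant sets inside that interval.

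For (\ref{Union}), I would start from the hypothesis $\varphi(X)=\varphi(Y)$ and use extensivity twice: $X\subseteq\varphi(X)$ and $Y\subseteq\varphi(Y)=\varphi(X)$. Taking the union gives $X\cup Y\subseteq\varphi(X)$, while trivially $X\subseteq X\cup Y$. Hence $X\subseteq X\cup Y\subseteq\varphi(X)$, and V2 (applied with $Y$ there replaced by $X\cup Y$) yields $\varphi(X)=\varphi(X\cup Y)$. By the symmetric argument with $Y$ in place of $X$, also $\varphi(Y)=\varphi(X\cup Y)$, which together give $\varphi(X\cup Y)=\varphi(X)=\varphi(Y)$.

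For (\ref{Convexity}), assume $X\subseteq Y\subseteq Z$ and $\varphi(X)=\varphi(Z)$. Extensivity gives $Z\subseteq\varphi(Z)=\varphi(X)$, so the chain $X\subseteq Y\subseteq Z\subseteq\varphi(X)$ holds; in particular $X\subseteq Y\subseteq\varphi(X)$. Applying V2 to this gives $\varphi(X)=\varphi(Y)$, and combining with the hypothesis $\varphi(X)=\varphi(Z)$ finishes it: $\varphi(X)=\varphi(Y)=\varphi(Z)$.

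I do not anticipate a genuine obstacle here; the only thing to be careful about is to invoke V2 in the correct direction — it constrains sets lying \emph{between} $X$ and $\varphi(X)$, so in the first part one must note that it is $X\cup Y$ (not $\varphi(X\cup Y)$) that needs to be placed in the interval $[X,\varphi(X)]$, and the conclusion $\varphi(X)=\varphi(X\cup Y)$ then comes for free. Everything else is a one-line set-containment check.
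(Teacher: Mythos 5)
Your proof is correct and follows exactly the argument the paper relies on: it is the precise mirror of the paper's own proof of Lemma \ref{co-un} (there $c(X)\subseteq X\cap Y\subseteq X$ plays the role of your $X\subseteq X\cup Y\subseteq\varphi(X)$, and \textbf{CV2} the role of \textbf{V2}). Both applications of \textbf{V2} are invoked in the right direction, so nothing is missing.
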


Since the second property deals with all sets lying between two given sets,
following \cite{Monjardet} we call the property \textit{convexity}.

\subsection{Uniquely generated violator spaces}

Let $(E,\alpha)$ be an arbitrary space with the operator $%
\alpha:2^{E}\rightarrow2^{E}$. $B\subseteq E$ is a \textit{generator} of $%
X\subseteq E$ if $\alpha(B)=\alpha(X)$. For $X\subseteq E$, a \textit{basis}
(minimal generator) of $X$ is a inclusion-minimal set $B\subseteq E$ (not
necessarily included in $X$) with $\alpha(B)=\alpha(X)$. A space $(E,\alpha)$
is \textit{uniquely generated} if every set $X\subseteq E$ has a unique
basis.

\begin{proposition}
\cite{KempnerLevit} \label{UQP} A violator space $(E,\varphi)$ is uniquely
generated if and only if for every $X,Y\subseteq E$ 
\begin{equation}
\varphi(X)=\varphi(Y)\Rightarrow\varphi(X\cap Y)=\varphi(X)=\varphi(Y)
\label{UQ}
\end{equation}
\end{proposition}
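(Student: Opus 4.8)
The plan is to prove the two implications separately, using only extensivity \textbf{V1}, self-convexity \textbf{V2}, and the finiteness of $E$.

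I would first dispatch the implication ``(\ref{UQ}) $\Rightarrow$ uniquely generated'', which is the short one. Assume (\ref{UQ}) and let $B_{1},B_{2}$ be two bases of a set $X\subseteq E$, so that $\varphi(B_{1})=\varphi(B_{2})=\varphi(X)$. Applying (\ref{UQ}) to the pair $(B_{1},B_{2})$ yields $\varphi(B_{1}\cap B_{2})=\varphi(B_{1})=\varphi(X)$, so $B_{1}\cap B_{2}$ is again a generator of $X$; since $B_{1}$ is inclusion-minimal among generators and $B_{1}\cap B_{2}\subseteq B_{1}$, we get $B_{1}\cap B_{2}=B_{1}$, i.e.\ $B_{1}\subseteq B_{2}$, and symmetrically $B_{2}\subseteq B_{1}$. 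Hence $B_{1}=B_{2}$; as a basis always exists ($E$ is finite and $X$ is a generator of itself), every set has exactly one basis.

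For the converse I would first isolate the key observation: in a uniquely generated space the basis of $X$ is not just an inclusion-minimal generator but the $\subseteq$-least generator, and in particular satisfies $B\subseteq X$. To see this, consider the family $\mathcal{G}_{X}=\{G\subseteq E:\varphi(G)=\varphi(X)\}$, a nonempty finite subposet of $(2^{E},\subseteq)$. Every member of a finite poset lies above some minimal member, so if $\mathcal{G}_{X}$ has a unique minimal member $B$ (the basis), then $B\subseteq G$ for all $G\in\mathcal{G}_{X}$; taking $G=X$ gives $B\subseteq X$. Now suppose $\varphi(X)=\varphi(Y)$. Then $\mathcal{G}_{X}=\mathcal{G}_{Y}$, so $X$ and $Y$ have the same basis $B$, with $B\subseteq X$ and $B\subseteq Y$, hence $B\subseteq X\cap Y$. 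Since also $X\cap Y\subseteq X\subseteq\varphi(X)=\varphi(B)$, we obtain $B\subseteq X\cap Y\subseteq\varphi(B)$, and \textbf{V2} applied to this chain gives $\varphi(X\cap Y)=\varphi(B)=\varphi(X)=\varphi(Y)$, which is (\ref{UQ}).

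The step I expect to be the crux is the ``least generator'' observation in the converse: one has to notice that uniqueness of an inclusion-minimal element of a finite poset forces it to be the minimum — this is precisely where finiteness of $E$ enters — and that the generator family, and hence the basis, depends only on the common value $\varphi(X)=\varphi(Y)$, so that $X$ and $Y$ really do share a basis. Everything else is a direct application of \textbf{V1} and \textbf{V2}.
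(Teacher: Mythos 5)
Your argument is correct and complete. Note that this paper does not actually prove Proposition \ref{UQP}: the statement is quoted from \cite{KempnerLevit}, so there is no in-paper proof to compare against. Both of your directions check out: the implication from (\ref{UQ}) to unique generation is the expected minimality argument, and in the converse your crucial step --- that unique generation forces the basis to be the least element of the finite, nonempty family $\{G\subseteq E:\varphi (G)=\varphi (X)\}$ (unique minimal element of a finite poset is a minimum, which is exactly where finiteness of $E$ enters), hence $B\subseteq X\cap Y$, after which \textbf{V2} applied to the chain $B\subseteq X\cap Y\subseteq \varphi (X)=\varphi (B)$ yields (\ref{UQ}) --- is precisely the content the paper records, as a reformulation of (\ref{UQ}), in equation (\ref{UQI}).
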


We can rewrite the property (\ref{UQ}) as follows: for every set $X\subseteq
E$ of a uniquely generated violator space $(E,\varphi)$, the basis $B$ of $X$
is the intersection of all generators of $X$: 
\begin{equation}
B=\bigcap\{Y\subseteq E:\varphi(Y)=\varphi(X)\}.  \label{UQI}
\end{equation}

One of the known examples of a not uniquely generated violator space is the
violator space associated with the smallest enclosing ball problem. A basis
of a set of points is a minimal subset with the same enclosing ball. In
particular, all points of the basis are located on the ball's boundary. For $%
%TCIMACRO{\U{211d} }%
%BeginExpansion
\mathbb{R}
%EndExpansion
^{2}$ the set $X$ of the four corners of a square has two bases: the two
pairs of diagonally opposite points. Moreover, one of these pairs is a basis
of the second pair. Thus the equality (\ref{UQI}) does not hold.

For each arbitrary space $(E,\alpha)$ with the operator $\alpha:2^{E}%
\rightarrow2^{E}$, an element $x$ of a subset $X\subseteq E$ is \textit{an
extreme point} of $X$ if $x\notin\alpha(X-x)$. The set of extreme points of $%
X$ is denoted by $ex(X)$.

\begin{proposition}
\cite{KempnerLevit} \label{exp} Let $(E,\varphi)$ be a violator space. Then 
\begin{equation*}
ex(X)=\bigcap\{B\subseteq X:\varphi(B)=\varphi(X)\}.
\end{equation*}
\end{proposition}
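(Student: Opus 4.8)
The plan is to establish the set equality by double inclusion, using only extensivity (V1) and self-convexity (V2) (the convexity property (\ref{Convexity}) of Lemma \ref{un} could equally be invoked). Write $\mathcal{G}_X=\{B\subseteq X:\varphi(B)=\varphi(X)\}$. First one should note that $X\in\mathcal{G}_X$, so $\mathcal{G}_X\neq\emptyset$ and $\bigcap\mathcal{G}_X$ is a well-defined subset of $X$; this also shows the asserted identity is not vacuous.

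For $ex(X)\subseteq\bigcap\mathcal{G}_X$: take $x\in ex(X)$, so $x\notin\varphi(X-x)$, and let $B\in\mathcal{G}_X$. Suppose, for contradiction, that $x\notin B$. Then $B\subseteq X-x$, and by V1 we have $X-x\subseteq X\subseteq\varphi(X)=\varphi(B)$, hence $B\subseteq X-x\subseteq\varphi(B)$. Self-convexity (V2) yields $\varphi(X-x)=\varphi(B)=\varphi(X)$. But then $x\in X\subseteq\varphi(X)=\varphi(X-x)$, contradicting $x\notin\varphi(X-x)$. Therefore $x\in B$ for every $B\in\mathcal{G}_X$, i.e., $x\in\bigcap\mathcal{G}_X$.

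For the reverse inclusion $\bigcap\mathcal{G}_X\subseteq ex(X)$: take $x\in\bigcap\mathcal{G}_X$; since $X\in\mathcal{G}_X$ this gives $x\in X$. Suppose $x\notin ex(X)$, that is, $x\in\varphi(X-x)$. By V1, $X-x\subseteq\varphi(X-x)$, and together with $x\in\varphi(X-x)$ we get $X-x\subseteq X\subseteq\varphi(X-x)$. Self-convexity (V2) then gives $\varphi(X-x)=\varphi(X)$, so $X-x\in\mathcal{G}_X$, whence $x\in X-x$ — a contradiction. Hence $x\in ex(X)$.

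I do not anticipate a genuine obstacle: the proof is a short manipulation of V1 and V2. The only points demanding a little care are checking that the relevant chains of inclusions ($B\subseteq X-x\subseteq\varphi(B)$ in one direction, $X-x\subseteq X\subseteq\varphi(X-x)$ in the other) are correctly sandwiched so that V2 applies, and recording that $\mathcal{G}_X$ is nonempty.
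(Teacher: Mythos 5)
Your proof is correct: both inclusions follow cleanly from V1 and V2 exactly as you argue, and the sandwiching steps ($B\subseteq X-x\subseteq\varphi(B)$ and $X-x\subseteq X\subseteq\varphi(X-x)$) are applied legitimately. Note that the present paper only cites Proposition \ref{exp} from \cite{KempnerLevit} without reproducing a proof, so there is no in-paper argument to compare against; your double-inclusion verification is the standard, essentially canonical way to establish it.
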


\begin{proposition}
\cite{KempnerLevit} \label{exp-vs} Let $(E,\varphi)$ be a violator space.
Then 
\begin{equation*}
ex(\varphi(X))\subseteq ex(X).
\end{equation*}
\end{proposition}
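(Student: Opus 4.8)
The plan is to take an arbitrary $x\in ex(\varphi(X))$ and verify the two conditions defining an extreme point of $X$, namely $x\in X$ and $x\notin\varphi(X-x)$. Unpacking the hypothesis, $x\in ex(\varphi(X))$ means $x\in\varphi(X)$ and $x\notin\varphi(\varphi(X)-x)$; also $\varphi(\varphi(X))=\varphi(X)$ by idempotency. The only machinery I expect to need is idempotency together with the convexity property (\ref{Convexity}) of Lemma \ref{un}. I will deliberately avoid using any monotonicity of $\varphi$, since violator operators need not be monotone.

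For $x\in X$: suppose not. Then extensivity gives the chain $X\subseteq\varphi(X)-x\subseteq\varphi(X)$, whose two endpoints have equal $\varphi$-image because $\varphi(X)=\varphi(\varphi(X))$. Applying (\ref{Convexity}) to this chain forces $\varphi(\varphi(X)-x)=\varphi(X)$, so $x\in\varphi(\varphi(X)-x)$, contradicting $x\in ex(\varphi(X))$.

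For $x\notin\varphi(X-x)$: suppose instead that $x\in\varphi(X-x)$. Since $x\in X$ (just shown), $X=(X-x)\cup\{x\}\subseteq\varphi(X-x)$, hence $X-x\subseteq X\subseteq\varphi(X-x)$, and self-convexity \textbf{V2} gives $\varphi(X-x)=\varphi(X)$. Now the chain $X-x\subseteq\varphi(X)-x\subseteq\varphi(X)$ again has endpoints with equal $\varphi$-image, since $\varphi(X-x)=\varphi(X)=\varphi(\varphi(X))$; so (\ref{Convexity}) yields $\varphi(\varphi(X)-x)=\varphi(X)$, and once more $x\in\varphi(\varphi(X)-x)$, contradicting $x\in ex(\varphi(X))$. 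Thus $x\in ex(X)$, which is the desired inclusion.

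I do not anticipate a genuine obstacle here; the argument is short. The one place to be careful is the passage in the second step from $X-x$ to $\varphi(X)-x$: one must not slip in a monotonicity step, and the convexity property (\ref{Convexity}) is exactly the tool that bridges that gap legitimately. (Alternatively, the second step can be phrased via Proposition \ref{exp}: $X-x$ then belongs to the family $\{B\subseteq\varphi(X):\varphi(B)=\varphi(\varphi(X))\}$ whose intersection is $ex(\varphi(X))$, so $ex(\varphi(X))\subseteq X-x$ and $x\notin ex(\varphi(X))$; but the direct argument above seems cleaner.)
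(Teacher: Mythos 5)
Your proof is correct, and every step checks out: in the first part the chain $X\subseteq\varphi(X)-x\subseteq\varphi(X)$ together with idempotency (or directly \textbf{V2}) legitimately gives $\varphi(\varphi(X)-x)=\varphi(X)$, and in the second part the passage from $\varphi(X-x)=\varphi(X)$ to $\varphi(\varphi(X)-x)=\varphi(X)$ via the chain $X-x\subseteq\varphi(X)-x\subseteq\varphi(X)$ and property (\ref{Convexity}) is exactly right, with no hidden appeal to monotonicity. Note, however, that the paper itself states Proposition \ref{exp-vs} only as a cited result from \cite{KempnerLevit} and gives no proof, so there is no in-paper argument to compare yours against. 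Your parenthetical alternative is in fact the shortest route and the one closest in spirit to the cited source: by Proposition \ref{exp}, $ex(\varphi(X))=\bigcap\{B\subseteq\varphi(X):\varphi(B)=\varphi(\varphi(X))\}$, and every $B\subseteq X$ with $\varphi(B)=\varphi(X)$ belongs to that family (since $B\subseteq X\subseteq\varphi(X)$ and $\varphi(X)=\varphi(\varphi(X))$), so the intersection defining $ex(\varphi(X))$ runs over a larger family than the one defining $ex(X)$ and is therefore contained in $ex(X)$. That version avoids the case analysis entirely; your element-wise argument buys a self-contained derivation straight from extensivity, idempotency, and \textbf{V2}, at the cost of being somewhat longer.
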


\begin{theorem}
\cite{KempnerLevit} \label{th2} Let $(E,\varphi)$ be a violator space. Then $%
(E,\varphi)$ is uniquely generated if and only if for every set $X\subseteq
E $, $\varphi (X)=\varphi(ex(X))$.
\end{theorem}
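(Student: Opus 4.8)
The plan is to prove both implications directly from the tools already recorded: the description $ex(X)=\bigcap\{B\subseteq X:\varphi(B)=\varphi(X)\}$ from Proposition \ref{exp}, the inclusion $ex(\varphi(X))\subseteq ex(X)$ from Proposition \ref{exp-vs}, the characterization of unique generation in Proposition \ref{UQP}, idempotency of $\varphi$, and the convexity property (\ref{Convexity}) of Lemma \ref{un}.

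For the forward implication I would assume $(E,\varphi)$ uniquely generated, fix $X\subseteq E$, and look at the family $\mathcal{F}=\{B\subseteq X:\varphi(B)=\varphi(X)\}$ of generators of $X$ that are contained in $X$. This family is nonempty ($X\in\mathcal{F}$) and, crucially, closed under intersection: if $B_{1},B_{2}\in\mathcal{F}$ then $\varphi(B_{1})=\varphi(X)=\varphi(B_{2})$, so (\ref{UQ}) yields $\varphi(B_{1}\cap B_{2})=\varphi(X)$ while $B_{1}\cap B_{2}\subseteq X$. Since $E$ is finite, $\bigcap\mathcal{F}$ itself lies in $\mathcal{F}$; but $\bigcap\mathcal{F}=ex(X)$ by Proposition \ref{exp}, so $\varphi(ex(X))=\varphi(X)$.

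For the converse I would assume $\varphi(ex(X))=\varphi(X)$ for every $X$ and verify the criterion of Proposition \ref{UQP}, i.e. that $\varphi(X)=\varphi(Y)$ forces $\varphi(X\cap Y)=\varphi(X)$. The key object is $Z=ex(\varphi(X))$; because $\varphi(X)$ and $\varphi(Y)$ are literally the same set, $Z=ex(\varphi(Y))$ as well. Proposition \ref{exp-vs} then gives $Z\subseteq ex(X)\subseteq X$ and $Z\subseteq ex(Y)\subseteq Y$, hence $Z\subseteq X\cap Y$. Applying the standing hypothesis to the set $\varphi(X)$ and then idempotency, $\varphi(Z)=\varphi(ex(\varphi(X)))=\varphi(\varphi(X))=\varphi(X)$. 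Finally $Z\subseteq X\cap Y\subseteq X$ together with $\varphi(Z)=\varphi(X)$ and (\ref{Convexity}) give $\varphi(X\cap Y)=\varphi(X)=\varphi(Y)$.

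The inclusion bookkeeping is routine; the step I expect to be the real obstacle is, in the converse, hitting on $Z=ex(\varphi(X))$ as the correct witness --- it must be \emph{simultaneously} a generator of $X$ (which needs the hypothesis plus idempotency) and a subset of $X\cap Y$ (which is precisely what Proposition \ref{exp-vs} provides), after which convexity bridges $Z$ and $X$. In the forward direction the one thing to notice is that unique generation makes the family of internal generators closed under intersection, so its minimum --- namely $ex(X)$ --- is again a generator.
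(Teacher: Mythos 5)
Your proof is correct. Note that the paper itself does not prove Theorem \ref{th2} --- it is quoted from \cite{KempnerLevit} --- so there is no internal argument to compare against; what matters is that your proof is valid and uses only facts the paper records, which it is and does. The forward direction is sound: unique generation gives (\ref{UQ}) by Proposition \ref{UQP}, so the finite nonempty family $\{B\subseteq X:\varphi(B)=\varphi(X)\}$ is intersection-closed, its total intersection is a member, and Proposition \ref{exp} identifies that intersection with $ex(X)$. (A marginally shorter route: by (\ref{UQI}) the unique basis $B$ satisfies $B\subseteq ex(X)\subseteq X$ and $\varphi(B)=\varphi(X)$, so (\ref{Convexity}) gives $\varphi(ex(X))=\varphi(X)$ directly.) The converse is also sound and is the more delicate half, exactly as you flag: $Z=ex(\varphi(X))$ is a generator of $X$ by the standing hypothesis applied to $\varphi(X)$ together with idempotency, it lies in $X\cap Y$ by Proposition \ref{exp-vs} and $ex(W)\subseteq W$, and then (\ref{Convexity}) applied to $Z\subseteq X\cap Y\subseteq X$ yields $\varphi(X\cap Y)=\varphi(X)$, which is the criterion of Proposition \ref{UQP}. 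There is no circularity: you avoid Corollary \ref{cor_ex}, which depends on the theorem being proved, and rely only on results stated before it.
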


\begin{corollary}
\cite{KempnerLevit} \label{cor_ex} Let $(E,\varphi)$ be a uniquely generated
violator space. Then for every $X\subseteq E$ the set $ex(X)$ is the unique
basis of $X$.
\end{corollary}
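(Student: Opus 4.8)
The plan is to identify $ex(X)$ with the unique basis $B$ of $X$. Since $(E,\varphi)$ is uniquely generated, $X$ has a unique basis $B$, and by the reformulation (\ref{UQI}) of Proposition \ref{UQP} we know that $B=\bigcap\{Y\subseteq E:\varphi(Y)=\varphi(X)\}$; in particular $B$ is contained in every generator of $X$ and satisfies $\varphi(B)=\varphi(X)$.

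First I would compare this with the description of $ex(X)$ furnished by Proposition \ref{exp}, namely $ex(X)=\bigcap\{B'\subseteq X:\varphi(B')=\varphi(X)\}$. The family occurring here is a subfamily of the family whose intersection is $B$ (it consists precisely of those generators of $X$ that happen to lie inside $X$), so intersecting over the smaller family can only enlarge the result; hence $B\subseteq ex(X)$. Since trivially $ex(X)\subseteq X$, we obtain $B\subseteq ex(X)\subseteq X$, so $B$ is itself a generator of $X$ that is contained in $X$.

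Consequently $B$ is one of the sets over which the intersection defining $ex(X)$ is taken, whence $ex(X)=\bigcap\{B'\subseteq X:\varphi(B')=\varphi(X)\}\subseteq B$. Combining this with $B\subseteq ex(X)$ gives $ex(X)=B$, and since $B$ is by definition the unique basis of $X$, so is $ex(X)$. (Alternatively one could begin from Theorem \ref{th2}, which yields $\varphi(ex(X))=\varphi(X)$ and so shows directly that $ex(X)$ is a generator of $X$; but the inclusion $B\subseteq ex(X)$ is still needed to establish minimality and uniqueness.)

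The only delicate point — and the step I expect to carry the weight — is the inclusion $B\subseteq ex(X)$, which rests essentially on the characterization (\ref{UQI}) of the basis as the intersection of \emph{all} generators of $X$, hence on unique generatedness. Without that hypothesis the basis need not even be a subset of $X$ (as in the four-corners-of-a-square instance of the smallest-enclosing-ball violator space), and then $ex(X)$ can fail to generate $X$ altogether, so no such identification is available.
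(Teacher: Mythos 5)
Your argument is correct: the sandwich $B\subseteq ex(X)\subseteq B$, obtained from (\ref{UQI}) (the unique basis is the intersection of \emph{all} generators) together with Proposition \ref{exp} (whose intersection runs over the smaller family of generators contained in $X$), is exactly the intended derivation of this corollary, which the paper states without proof as an immediate consequence of Theorem \ref{th2} and the preceding propositions. Your closing remark is also on point -- whether one starts from (\ref{UQI}) or from Theorem \ref{th2}, the inclusion $B\subseteq ex(X)$ is what secures minimality, and it is precisely what fails without unique generation.
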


\section{Co-violator spaces}

Theorem \ref{th2} and Proposition \ref{exp-vs} show that there is some
duality between extensive ($X \subseteq \varphi(X)$) and contracting ($ex(X)
\subseteq X$) operators. To study this connection we introduce a new type of
spaces.

\begin{definition}
A \textit{co-violator space} is a pair $(E,c)$, where $E$ is a finite set
and $c$ is an operator $2^{E}\rightarrow2^{E}$ such that for all subsets $%
X,Y\subseteq E$ the following properties are satisfied:

\textbf{CV1}: $c(X)\subseteq X$,

\textbf{CV2}: $(c(X)\subseteq Y\subseteq X)\Rightarrow c(X)=c(Y)$.
\end{definition}

Operators satisfying the property \textbf{CV1} are called contracting
operators.

In social sciences, contracting operators are called choice functions,
usually adding a requirement that $c(X)\neq \emptyset $ for every $X\neq
\emptyset $. The property \textbf{CV2} is called the \textit{outcast property%
} or the \textit{Aizerman property} \cite{Monjardet}.

The properties of co-violator spaces correspond to the corresponding
("mirrored") properties of violator spaces. For instance, every co-violator
operator $c$ is idempotent. Indeed, since $c$ is contracting $c(X)\subseteq
c(X)\subseteq X$. Then, \textbf{CV2} implies $c(c(X))=c(X)$.

Lemma \ref{un} is converted to the following.

\begin{lemma}
\label{co-un} Let $(E,c)$ be a co-violator space. Then 
\begin{equation}
c(X)=c(Y)\Rightarrow c(X\cap Y)=c(X)=c(Y)  \label{Intersection}
\end{equation}
and 
\begin{equation}
(X \subseteq Y \subseteq Z) \wedge (c(X)=c(Z)) \Rightarrow c(X)=c(Y)=c(Z)
\label{Co-Convexity}
\end{equation}
for every $X,Y,Z\subseteq E$.
\end{lemma}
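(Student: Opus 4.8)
The plan is to prove both implications directly from the defining axioms \textbf{CV1} and \textbf{CV2}, in exact analogy with the proof of Lemma \ref{un}; each of the two desired equalities will reduce to a single application of the outcast property \textbf{CV2}, once the relevant inclusions have been arranged.

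For the implication (\ref{Intersection}), I would assume $c(X)=c(Y)$. By \textbf{CV1} we have $c(X)\subseteq X$, and using the hypothesis also $c(X)=c(Y)\subseteq Y$; hence $c(X)\subseteq X\cap Y$. This yields the chain $c(X)\subseteq X\cap Y\subseteq X$, so \textbf{CV2} (applied with the roles of its two sets played by $X$ and $X\cap Y$) gives $c(X\cap Y)=c(X)$, and therefore $c(X\cap Y)=c(X)=c(Y)$.

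For the implication (\ref{Co-Convexity}), I would assume $X\subseteq Y\subseteq Z$ and $c(X)=c(Z)$. Then $c(Z)=c(X)\subseteq X\subseteq Y$ by \textbf{CV1} and the hypothesis, while $Y\subseteq Z$ by assumption; thus $c(Z)\subseteq Y\subseteq Z$. Applying \textbf{CV2} with its two sets being $Z$ and $Y$ gives $c(Y)=c(Z)$, so $c(X)=c(Y)=c(Z)$.

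I do not anticipate a genuine obstacle here: the only point requiring care is to make sure the inclusions feeding into \textbf{CV2} are in the correct nested order (the sandwich $c(\cdot)\subseteq\cdot\subseteq\cdot$), which is exactly what \textbf{CV1} supplies once $c(X)=c(Y)$ is used to place $c(X)$ inside $Y$ as well as inside $X$. One could alternatively deduce the lemma formally from Lemma \ref{un} through the order-reversing correspondence between violator and co-violator operators, but the direct route above is shorter and self-contained.
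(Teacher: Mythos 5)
Your proof is correct and follows essentially the same route as the paper's: in each case you use \textbf{CV1} (plus the hypothesis) to sandwich $c(X)$ respectively $c(Z)$ between the relevant sets and then apply \textbf{CV2} once. No issues.
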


\begin{proof}
Prove (\ref{Intersection}). Let $c(X)=c(Y)$. \textbf{CV1} implies that $%
c(X)\subseteq X$ and $c(Y)=c(X) \subseteq Y$. Then $c(X) \subseteq X\cap Y
\subseteq X$, that gives (by \textbf{CV2}) $c(X\cap Y)=c(X)$.

To prove (\ref{Co-Convexity}) let $(X\subseteq Y\subseteq
Z)\wedge(c(X)=c(Z)) $. \textbf{CV1} yields $c(Z)=c(X)\subseteq X\subseteq Y$%
. Then outcast property allows us to get $c(Z)\subseteq Y\subseteq
Z\Rightarrow c(Y)=c(X)=c(Z)$.
\end{proof}

It is easy to see that all the properties of violator spaces hold in their
dual interpretation for co-violator spaces. Since a co-violator operator is
a choice function with outcast properties, the connection between these two
types of spaces may result in better understanding of two theories and in
new findings in each of them.

Connections between contracting and extensive operators were studied in many
works, while most of them were dedicated to connections between choice
functions and closure operators \cite{Ando, Danilov, Monjardet}. Naturally,
extreme point operators were considered as choice functions. But, as we will
see in Proposition \ref{outcast_u}, the extreme point operator of a violator
space satisfies the outcast property, and so it forms a co-violator space,
if and only if the violator space is uniquely generated. We also consider
choice functions investigated in \cite{Libkin}. The \textit{interior
operator }(well-known in topology) is dual to a closure operator. Given an
extensive operator $\varphi :2^{E}\rightarrow 2^{E}$, one can get a
contracting operator $c$: $c(X)=E-\varphi (E-X)$ or $\overline{c(X)}=\varphi
(\overline{X})$.

\begin{proposition}
\label{co-operator} $(E,\varphi)$ is a violator space if and only if $(E,c)$
is a co-violator space, where $c(X)=\overline{\varphi(\overline{X})}$.
\end{proposition}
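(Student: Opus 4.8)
The plan is to exploit the fact that complementation $X\mapsto\overline{X}=E-X$ is an inclusion-reversing involution of $2^{E}$, and that the two axioms defining a co-violator space are obtained from those of a violator space precisely by conjugating with this involution. First I would record the symmetric identity $\varphi(X)=\overline{c(\overline{X})}$, which shows that the correspondence $\varphi\leftrightarrow c$ is its own inverse; consequently it is enough to prove that each violator axiom is \emph{equivalent} to its mirrored co-violator axiom, and both implications of the stated biconditional then fall out at once.

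Next I would check \textbf{CV1} against \textbf{V1}. Unwinding the definition, $c(X)\subseteq X$ reads $\overline{\varphi(\overline{X})}\subseteq X$, equivalently $\overline{X}\subseteq\varphi(\overline{X})$. Since $X\mapsto\overline{X}$ is a bijection of $2^{E}$ onto itself, the statement ``$c(X)\subseteq X$ for all $X\subseteq E$'' is literally the statement ``$Y\subseteq\varphi(Y)$ for all $Y\subseteq E$'', which is \textbf{V1}. Then I would check \textbf{CV2} against \textbf{V2}. Assume $c(X)\subseteq Y\subseteq X$; complementing this chain (and using $\overline{c(X)}=\varphi(\overline{X})$) gives $\overline{X}\subseteq\overline{Y}\subseteq\varphi(\overline{X})$. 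Setting $A=\overline{X}$ and $B=\overline{Y}$, this is exactly the hypothesis $A\subseteq B\subseteq\varphi(A)$ of \textbf{V2}, whose conclusion $\varphi(A)=\varphi(B)$ translates back to $\varphi(\overline{X})=\varphi(\overline{Y})$, i.e. $c(X)=c(Y)$. Conversely every instance of \textbf{V2} arises from a unique instance of \textbf{CV2} in the same way, so \textbf{V2} and \textbf{CV2} are equivalent as well. Combining the two equivalences proves the proposition in both directions.

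The argument is essentially bookkeeping, so there is no substantial obstacle; the one point that demands care — and which I would flag as the only thing that could go wrong — is matching the universal quantifiers and the nested inclusions correctly. One must use both that complementation is a bijection of $2^{E}$ (so that ``for all $X$'' and ``for all $\overline{X}$'' are the same, and likewise for pairs $(X,Y)$) and that it reverses inclusion (so that the chain $c(X)\subseteq Y\subseteq X$ maps to $\overline{X}\subseteq\overline{Y}\subseteq\varphi(\overline{X})$ with the closure term $\varphi(\overline{X})=\overline{c(X)}$ correctly occupying the top of the chain). Once this dictionary is in place, nothing beyond complementing a two-step inclusion is required.
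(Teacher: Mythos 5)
Your proof is correct and follows essentially the same route as the paper: translating \textbf{CV1} and \textbf{CV2} into \textbf{V1} and \textbf{V2} by complementing the inclusion chains, using that $X\mapsto\overline{X}$ is an inclusion-reversing bijection of $2^{E}$. The paper's own argument is exactly this dictionary, so no further comment is needed.
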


\begin{proof}
It is easy to see that $\varphi $ is an extensive operator if and only if $c$
is a contracting operator. To prove that $c$ satisfies the outcast property
if and only if $\varphi $ is self-convex one has just to pay attention that:

$c(X) \subseteq Y \subseteq X \Leftrightarrow \overline{X} \subseteq 
\overline{Y} \subseteq \overline{c(X)} \Leftrightarrow \overline{X}
\subseteq \overline{Y} \subseteq \varphi(\overline{X}) \Rightarrow \varphi(%
\overline{X})=\varphi(\overline{Y}) \Leftrightarrow c(X)=c(Y)$. The opposite
direction is proved completely analogously.
\end{proof}

\section{Cospanning relations of violator and co-violator spaces}

Let $E=\left\{ {x_{1},x_{2},...,x_{d}}\right\} $. The graph $H(E)$ is
defined as follows. The vertices are the finite subsets of $E$, two vertices 
$A$ and $B$ are adjacent if and only if they differ in exactly one element.
Actually, $H(E)$ is \textit{the hypercube} on $E$ of dimension $d$, since
the hypercube is known to be equivalently considered as the graph on the
Boolean space $\{0,1\}^{d}$ in which two vertices form an edge if and only
if they differ in exactly one position.

Let $(E,\varphi)$ be a violator space. The two sets $X$ and $Y$ are \textit{%
equivalent} (or \textit{cospanning}) if $\varphi(X)=\varphi(Y)$. In what
follows, $\mathcal{P}$ denotes a partition of $H(E)$ ( or $2^{E}$) into
equivalence classes with regard to this relation, and $[A]_{\varphi}:=\{X%
\subseteq E:\varphi(X)=\varphi(A)\}$.

\begin{remark}
Note, that the cospanning relation associated with a violator operator $%
\varphi$ coincides with the cospanning relation associated with an original
violator mapping $\nu$.
\end{remark}

The following theorem characterizes cospanning relations in violator spaces.

\begin{theorem}
\label{T_rel} Let $E$ be a finite set and $R \subseteq 2^{E} \times 2^{E}$
be an equivalence relation on $2^{E}$. Then $R$ is the cospanning relation
of a violator space if and only if the following properties hold for every $%
X,Y,Z \subseteq E$:

\textbf{R1}: if $(X,Y) \in R$, then $(X,X \cup Y) \in R$

\textbf{R2}: if $X \subseteq Y \subseteq Z$ and $(X,Z) \in R$, then $(X,Y)
\in R$.
\end{theorem}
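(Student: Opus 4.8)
The plan is to prove the two directions separately, with the second one carrying all the weight.

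\textbf{Necessity.} Suppose $R$ is the cospanning relation of a violator space $(E,\varphi)$, i.e.\ $(X,Y)\in R$ if and only if $\varphi(X)=\varphi(Y)$. Then $R$ is an equivalence relation by construction, and R1 and R2 are merely restatements of Lemma~\ref{un}: equation (\ref{Union}) says $\varphi(X)=\varphi(Y)$ implies $\varphi(X\cup Y)=\varphi(X)$, which is R1; equation (\ref{Convexity}) says that $X\subseteq Y\subseteq Z$ with $\varphi(X)=\varphi(Z)$ forces $\varphi(X)=\varphi(Y)$, which is R2. So this direction is immediate.

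\textbf{Sufficiency.} Given an equivalence relation $R$ satisfying R1 and R2, the natural candidate operator is $\varphi(X)=\bigcup\{Y\subseteq E:(X,Y)\in R\}$, the union of the whole $R$-class of $X$. First I would record the routine facts: reflexivity of $R$ gives $X\subseteq\varphi(X)$, so extensivity V1 holds; and if $(X,Y)\in R$ then the $R$-classes of $X$ and $Y$ coincide, hence $\varphi(X)=\varphi(Y)$, which is one half of the claim that the cospanning relation of $\varphi$ equals $R$.

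The crux is the claim that $(X,\varphi(X))\in R$ for every $X\subseteq E$. For this I would first prove the pairwise statement: if $(X,A)\in R$ and $(X,B)\in R$, then $(X,A\cup B)\in R$ — indeed, symmetry and transitivity give $(A,B)\in R$, then R1 applied to $(A,B)$ yields $(A,A\cup B)\in R$, and one more use of transitivity with $(X,A)\in R$ gives $(X,A\cup B)\in R$. Since $E$ is finite, the $R$-class of $X$ has only finitely many members $Y_1,\dots,Y_m$, and a straightforward induction on $k$ using the pairwise statement shows $(X,Y_1\cup\cdots\cup Y_k)\in R$; taking $k=m$ gives exactly $(X,\varphi(X))\in R$.

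Once this is in hand the rest is short. For self-convexity V2, if $X\subseteq Y\subseteq\varphi(X)$ then, applying R2 to the chain $X\subseteq Y\subseteq\varphi(X)$ together with $(X,\varphi(X))\in R$, we get $(X,Y)\in R$, hence $\varphi(X)=\varphi(Y)$. Finally, the cospanning relation of $\varphi$ is exactly $R$: one inclusion was noted above, and conversely if $\varphi(X)=\varphi(Y)$ then both $(X,\varphi(X))\in R$ and $(Y,\varphi(Y))\in R$, so $(X,Y)\in R$ by symmetry and transitivity. The one step requiring genuine thought is upgrading R1 — which only inserts a single set at a time — to closure of an $R$-class under arbitrary finite unions, and this is precisely where finiteness of $E$ is used; all the remaining verifications are bookkeeping.
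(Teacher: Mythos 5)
Your proof is correct and follows essentially the same route as the paper: the paper defines $\varphi(X)$ as the (unique) maximal element of the class $[X]_R$, which is exactly your $\bigcup\{Y:(X,Y)\in R\}$, and derives V1, V2 and the coincidence of the relations just as you do. The only difference is that you spell out, via the finite induction on pairwise unions, why \textbf{R1} forces each class to contain its union — a step the paper merely asserts when it says R1 implies a unique maximal element.
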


\begin{proof}
Necessity follows immediately from Lemma \ref{un}.

Let us define an operator $\varphi $ and prove that it satisfies extensivity
and self-convexity. Since $R$ is an equivalence relation, it defines a
partition of $2^{E}$. Then, for each $X\subseteq E$ there is only one class
containing $X$. Thus for every set $X$, we define $\varphi (X)$ as a maximal
element in the class $[X]_{R}$. Notice, that the property \textbf{R1}
implies that each equivalence class has a unique maximal element, so the
partition is well-defined. Hence, we obtain that $X\subseteq \varphi (X)$
and $\varphi (\varphi (X))=\varphi (X)$. Then the self-convexity follows
immediately from \textbf{R2}. It is easy to see that the cospanning relation
w.r.t. $\varphi $ coincides with $R$.
\end{proof}

In conclusion, each equivalence class of the cospanning relation of a
violator space is closed under union (\textbf{R1}) and convex (\textbf{R2}).

The following theorem characterizes equivalence classes of co-violator
spaces.

\begin{theorem}
\label{cv_rel} Let $E$ be a finite set and $R \subseteq 2^{E} \times 2^{E}$
be an equivalence relation on $2^{E}$. Then $R$ is the cospanning relation
of a co-violator space if and only if the following properties hold for
every $X,Y,Z \subseteq E$:

\textbf{R3}: if $(X,Y) \in R$, then $(X,X \cap Y) \in R$

\textbf{R2}: if $X \subseteq Y \subseteq Z$ and $(X,Z) \in R$, then $(X,Y)
\in R$.
\end{theorem}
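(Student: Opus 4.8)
The plan is to mirror the proof of Theorem~\ref{T_rel}, exploiting the duality established in Proposition~\ref{co-operator}. For necessity, suppose $R$ is the cospanning relation of a co-violator space $(E,c)$. Then property \textbf{R3} is precisely the statement of equation~(\ref{Intersection}) in Lemma~\ref{co-un}: if $(X,Y)\in R$ then $c(X)=c(Y)$, hence $c(X\cap Y)=c(X)$, so $(X,X\cap Y)\in R$. Similarly, property \textbf{R2} is exactly equation~(\ref{Co-Convexity}): if $X\subseteq Y\subseteq Z$ and $c(X)=c(Z)$, then $c(Y)=c(X)$, so $(X,Y)\in R$. Thus necessity is immediate from Lemma~\ref{co-un}.

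For sufficiency, assume $R$ is an equivalence relation satisfying \textbf{R3} and \textbf{R2}. I would define an operator $c$ by letting $c(X)$ be the \emph{minimal} element of the equivalence class $[X]_R$, and check this is well-defined: property \textbf{R3} guarantees that if $A,B\in[X]_R$ then $A\cap B\in[X]_R$, so the class is closed under intersection and therefore has a unique inclusion-minimal element (here finiteness of $E$ is used). By construction $c(X)\subseteq X$, giving \textbf{CV1}, and since $c(X)\in[X]_R$ we get $[c(X)]_R=[X]_R$, so $c(c(X))=c(X)$. For the outcast property \textbf{CV2}: suppose $c(X)\subseteq Y\subseteq X$. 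Since $c(X)$ and $X$ lie in the same class, $(c(X),X)\in R$; applying \textbf{R2} with the chain $c(X)\subseteq Y\subseteq X$ yields $(c(X),Y)\in R$, so $Y$ is in the class $[X]_R$, whence $c(Y)=c(X)$. Finally one checks that the cospanning relation of this $c$ coincides with $R$: $c(X)=c(Y)$ iff $X$ and $Y$ have the same minimal element of their $R$-classes iff (since these minimal elements are themselves representatives) $[X]_R=[Y]_R$ iff $(X,Y)\in R$.

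An alternative, slicker route would be to invoke Proposition~\ref{co-operator} directly: given $R$, define the ``complemented'' relation $R'$ by $(X,Y)\in R' \iff (\overline{X},\overline{Y})\in R$. One verifies that \textbf{R3} for $R$ translates into \textbf{R1} for $R'$ (since $\overline{X\cap Y}=\overline{X}\cup\overline{Y}$) and \textbf{R2} for $R$ translates into \textbf{R2} for $R'$ (reversing all inclusions and complementing). By Theorem~\ref{T_rel}, $R'$ is the cospanning relation of some violator space $(E,\varphi)$; then setting $c(X)=\overline{\varphi(\overline{X})}$ gives a co-violator space by Proposition~\ref{co-operator}, and its cospanning relation is $R$. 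I would probably present the direct argument in the main text since it is self-contained, and remark on the duality.

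The main obstacle—such as it is—is the well-definedness of $c$: one must be careful that closure of each equivalence class under intersection (from \textbf{R3}, used repeatedly over the finitely many elements of a class) really does force a \emph{unique} minimal element, exactly as \textbf{R1} forced a unique maximal element in Theorem~\ref{T_rel}. Everything else is a routine dualization of the earlier argument, and I expect the write-up to be only a few lines, likely ending with ``the proof is completely analogous to that of Theorem~\ref{T_rel}.''
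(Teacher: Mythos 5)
Your proposal is correct and follows essentially the same route as the paper: necessity from Lemma \ref{co-un}, and sufficiency by defining $c(X)$ as the minimal element of $[X]_R$, whose uniqueness is guaranteed by closure under intersection (\textbf{R3}), then verifying \textbf{CV1}, \textbf{CV2}, and that the cospanning relation of $c$ is $R$ — you merely spell out details the paper leaves as ``easy to see,'' and your alternative complementation argument via Proposition \ref{co-operator} and Theorem \ref{T_rel} is a valid extra observation not present in the paper.
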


\begin{proof}
Necessity follows immediately from Lemma \ref{co-un}. By analogy with the
proof of Theorem \ref{T_rel} we define $c(X)$ to be a minimal element in the
class $[X]_{R}$. Since each class is closed under intersection (\textbf{R3}%
), the partition is well-defined. It is easy to see that operator $c$ is
contracting, satisfies the outcast property, and its cospanning relation
coincides with $R$.
\end{proof}

Consider now both a violator operator $\varphi $ and a co-violator operator $%
c(X)=\overline{\varphi (\overline{X})}$.

\begin{proposition}
\label{co-co} There is a one-to-one correspondence between an equivalence
class $[X]_{\varphi}$ of $X$ of the cospanning relation associated with a
violator operator $\varphi$ and an equivalence class $[\overline{X}]_{c}$
w.r.t. a co-violator operator $c$, i.e., $A \in [X]_{\varphi}$ if and only
if $\overline{A} \in [\overline{X}]_{c}$.
\end{proposition}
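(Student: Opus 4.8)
The plan is to unwind the definition $c(X)=\overline{\varphi(\overline{X})}$ and translate the equivalence $A\in[X]_\varphi$ directly into a statement about $c$. By definition, $A\in[X]_\varphi$ means $\varphi(A)=\varphi(X)$. Taking complements, this is equivalent to $\overline{\varphi(A)}=\overline{\varphi(X)}$. Now I would observe that $\overline{\varphi(A)}=\overline{\varphi(\overline{\overline{A}})}=c(\overline{A})$, and likewise $\overline{\varphi(X)}=c(\overline{X})$. Hence $\varphi(A)=\varphi(X)$ holds if and only if $c(\overline{A})=c(\overline{X})$, which is precisely the statement $\overline{A}\in[\overline{X}]_c$.

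The key steps, in order, are: (1) invoke Proposition \ref{co-operator} to guarantee that $(E,c)$ with $c(X)=\overline{\varphi(\overline{X})}$ is indeed a co-violator space, so that $[\overline{X}]_c$ is well-defined; (2) rewrite membership in $[X]_\varphi$ as the equality $\varphi(A)=\varphi(X)$; (3) complement both sides and use the involution property of complementation ($\overline{\overline{A}}=A$) to rewrite $\overline{\varphi(A)}$ as $c(\overline{A})$ and $\overline{\varphi(X)}$ as $c(\overline{X})$; (4) conclude that the chain of equivalences yields exactly $A\in[X]_\varphi \Leftrightarrow \overline{A}\in[\overline{X}]_c$. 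Finally, I would remark that the map $A\mapsto\overline{A}$ is a bijection on $2^E$, so it restricts to a bijection between the two equivalence classes, establishing the claimed one-to-one correspondence.

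I do not expect any serious obstacle here: the proof is essentially a chain of "if and only if" steps built on the fact that complementation is an involution and order-reversing bijection on $2^E$. The only point requiring a little care is making sure the direction of the equivalences is genuinely biconditional at every link — in particular that $\overline{\varphi(A)}=\overline{\varphi(X)}$ is equivalent to (not merely implied by) $\varphi(A)=\varphi(X)$, which is immediate since $A\mapsto\overline{A}$ is injective. Everything else, including the well-definedness of $[\overline{X}]_c$, is already supplied by Proposition \ref{co-operator}, so the argument is short.

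\begin{proof}
By Proposition \ref{co-operator}, $(E,c)$ with $c(X)=\overline{\varphi(\overline{X})}$ is a co-violator space, so the equivalence class $[\overline{X}]_c$ is well-defined. For any $A\subseteq E$ we have
\begin{equation*}
A\in\lbrack X]_{\varphi}\iff\varphi(A)=\varphi(X)\iff\overline{\varphi(A)}=\overline{\varphi(X)}\iff c(\overline{A})=c(\overline{X})\iff\overline{A}\in\lbrack\overline{X}]_{c},
\end{equation*}
where the second equivalence uses that complementation is injective on $2^{E}$, and the third uses $c(\overline{A})=\overline{\varphi(\overline{\overline{A}})}=\overline{\varphi(A)}$ together with the analogous identity for $X$. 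Since $A\mapsto\overline{A}$ is a bijection of $2^{E}$ onto itself, it restricts to a bijection between $[X]_{\varphi}$ and $[\overline{X}]_{c}$.
\end{proof}
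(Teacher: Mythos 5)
Your proof is correct and follows essentially the same route as the paper: both unwind $c(X)=\overline{\varphi(\overline{X})}$ and chain biconditionals through complementation to get $\varphi(A)=\varphi(X)\iff c(\overline{A})=c(\overline{X})$. The extra remarks on well-definedness via Proposition \ref{co-operator} and on complementation being a bijection are fine but add nothing beyond the paper's argument.
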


\begin{proof}
Indeed, $A \in [X]_{\varphi} \Leftrightarrow \varphi(X)=\varphi(A)
\Leftrightarrow \overline{c(\overline{X})} = \overline{c(\overline{A})}
\Leftrightarrow c(\overline{X})=c(\overline{A}) \Leftrightarrow \overline{A}
\in [\overline{X}]_{c}$.
\end{proof}

A uniquely generated violator space defines a cospanning relation with
additional property \textbf{R3} (see Proposition \ref{UQP}).

All in all, every uniquely generated violator space is a co-violator space
as well. Each equivalence class of the cospanning relation of a uniquely
generated violator space has an unique minimal element and an unique maximal
element. More precisely, for the sets $A\subseteq B\subseteq E$, let us
define the \textit{interval} $[A,B]$ as $\{C\subseteq E:A\subseteq
C\subseteq B\}$. Then each equivalence class of an uniquely generated
violator space is an interval. We call a partition of $H(E)$ into disjoint
intervals a \textit{hypercube partition}. The following Theorem follows
immediately from Theorem \ref{T_rel} and Proposition \ref{UQP}.

\begin{theorem}
(\cite{Clarkson}) (i) If $(E,\varphi)$ is a uniquely generated violator
space, then $\mathcal{P}$ is a hypercube partition of $H(E)$.

(ii) Every hypercube partition is the partition $\mathcal{P}$ of $H(E)$ into
equivalence classes of a uniquely generated violator space.
\end{theorem}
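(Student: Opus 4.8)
The plan is to prove the two implications separately, in each case reducing the statement to the characterization already available in Theorem \ref{T_rel} together with Proposition \ref{UQP}. For part (i), suppose $(E,\varphi)$ is a uniquely generated violator space. By Proposition \ref{UQP} its cospanning relation $R$ satisfies the extra property that $(X,Y)\in R$ implies $(X,X\cap Y)\in R$; it already satisfies \textbf{R1} and \textbf{R2} by Theorem \ref{T_rel} (necessity). First I would show that each equivalence class $[A]_\varphi$ is an interval. Let $B$ be the (unique) maximal element of the class, guaranteed by \textbf{R1}, and let $C$ be the intersection of all members of the class; by the closure under intersection (the rewritten form \eqref{UQI}, or directly iterating \textbf{R3}) $C$ is itself in the class and is its unique minimal element. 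Then for any $Z$ with $C\subseteq Z\subseteq B$, applying \textbf{R2} to $C\subseteq Z\subseteq B$ and $(C,B)\in R$ gives $(C,Z)\in R$, so $Z$ is in the class; conversely every member of the class lies between $C$ and $B$. Hence $[A]_\varphi=[C,B]$ is an interval, and since the classes partition $2^E=H(E)$ we get a hypercube partition $\mathcal{P}$.

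For part (ii), I would go in the reverse direction. Given a hypercube partition $\mathcal{P}$ of $H(E)$ into disjoint intervals, define $\varphi(X)$ to be the maximal endpoint of the unique interval $[A_X,B_X]$ containing $X$, i.e. $\varphi(X)=B_X$. Then $X\subseteq\varphi(X)$, giving \textbf{V1} (extensivity), and idempotence is immediate since $\varphi(X)=B_X$ lies in the same interval. For self-convexity \textbf{V2}, suppose $X\subseteq Y\subseteq\varphi(X)=B_X$; since $X$ and $B_X$ lie in the interval $[A_X,B_X]$ and intervals are convex, $Y\in[A_X,B_X]$ as well, so $\varphi(Y)=B_X=\varphi(X)$. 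Thus $(E,\varphi)$ is a violator space whose cospanning classes are exactly the intervals of $\mathcal{P}$. It remains to check unique generation: by Proposition \ref{UQP} it suffices to verify property \eqref{UQ}, and if $\varphi(X)=\varphi(Y)$ then $X,Y$ lie in a common interval $[A,B]$, whence $A\subseteq X\cap Y\subseteq B$ so $X\cap Y$ lies in the same interval and $\varphi(X\cap Y)=B=\varphi(X)=\varphi(Y)$; equivalently, the minimal element $A$ of each interval is the intersection of all its members, which is exactly \eqref{UQI}. Hence $\mathcal{P}$ is the cospanning partition of a uniquely generated violator space.

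The only genuinely delicate point is confirming that the class of $X$ really does possess a \emph{minimal} element in part (i) — equivalently, that closure under pairwise intersection (property \textbf{R3} / Proposition \ref{UQP}) forces closure under arbitrary intersection of the class, so that $\bigcap\{Y:\varphi(Y)=\varphi(X)\}$ is again in the class. This follows by finiteness of $E$ and a routine induction on the number of sets intersected, but it is the step one must state carefully; everything else is a direct translation through Theorem \ref{T_rel} and Proposition \ref{UQP}. I would also remark that, since a hypercube partition is visibly symmetric under complementation of every set, part (ii) could alternatively be derived from part (i) applied to the associated co-violator operator $c(X)=\overline{\varphi(\overline{X})}$ via Proposition \ref{co-co}, but the direct argument above is shorter.
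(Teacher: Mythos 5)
Your proposal is correct and follows essentially the same route the paper intends: the paper states the theorem follows immediately from Theorem \ref{T_rel} and Proposition \ref{UQP} (classes are closed under union, intersection, and are convex, hence intervals), and your argument simply fills in those details, including the finiteness/induction step for the minimal element and the explicit reverse construction of $\varphi$ as the top of each interval.
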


More specifically \cite{KempnerLevit}, $[A]_{\varphi}=[ex(A),\varphi(A)]$
for every set $A \subseteq E$.

Let us consider now a uniquely generated violator space $(E,\varphi )$ and
the operator $ex$. Since each equivalence class $[A]_{\varphi }$ w.r.t.
operator $\varphi $ is an interval $[ex(A),\varphi (A)]$, we can see that
for each $X\in \lbrack ex(A),\varphi (A)]$ not only $\varphi (X)=\varphi (A)$%
, but $ex(X)=ex(A)$ as well. Since $\mathcal{P}$ is a hypercube partition of 
$H(E)$ we conclude with $[X]_{\varphi }=[X]_{ex}$. Thus the cospanning
partition (quotient set) associated with an operator $\varphi $ coincides
with the cospanning partition associated with a contracting operator $ex$.
Since $ex(X)$ is a minimal element of $[X]$ we immediately obtain the
following

\begin{proposition}
If $(E,\varphi)$ is a uniquely generated violator space, then operator $ex$
satisfies the following properties:

\textbf{X1}: $ex(ex(X))=ex(X)$

\textbf{X2}: $ex(X)=ex(Y)\Rightarrow ex(X\cup Y)=ex(X)=ex(Y)$

\textbf{X3}:$(X \subseteq Y \subseteq Z) \wedge (ex(X)=ex(Z)) \Rightarrow
ex(X)=ex(Y)=ex(Z)$

\textbf{X4}: $ex(X)=ex(Y)\Rightarrow ex(X\cap Y)=ex(X)=ex(Y)$
\end{proposition}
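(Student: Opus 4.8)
The plan is to derive all four properties of $ex$ from the single structural fact established in the paragraph immediately preceding the proposition: for a uniquely generated violator space, the cospanning partition $\mathcal{P}$ w.r.t. $\varphi$ coincides with the cospanning partition w.r.t. $ex$, i.e. $[X]_{\varphi}=[X]_{ex}$ for every $X\subseteq E$, and each such class is the interval $[ex(A),\varphi(A)]$. Equivalently, $ex(X)=ex(Y)$ holds if and only if $\varphi(X)=\varphi(Y)$. Once this equivalence is in hand, every claim becomes a transcription of a known fact about $\varphi$.

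First I would record the equivalence $ex(X)=ex(Y)\Leftrightarrow\varphi(X)=\varphi(Y)$ explicitly, since it is the workhorse. For \textbf{X1}: $ex(X)$ lies in the interval $[ex(X),\varphi(X)]$, which is exactly the class $[X]_{ex}$; hence $ex(ex(X))=ex(X)$. (Alternatively, $ex(X)\in[X]_{\varphi}$ gives $\varphi(ex(X))=\varphi(X)$, so by the equivalence $ex(ex(X))=ex(X)$.) For \textbf{X2}: assume $ex(X)=ex(Y)$; by the equivalence $\varphi(X)=\varphi(Y)$, so by Lemma~\ref{un} (equation~(\ref{Union})) $\varphi(X\cup Y)=\varphi(X)=\varphi(Y)$, and applying the equivalence once more yields $ex(X\cup Y)=ex(X)=ex(Y)$. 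For \textbf{X3}: assume $X\subseteq Y\subseteq Z$ and $ex(X)=ex(Z)$; the equivalence gives $\varphi(X)=\varphi(Z)$, then~(\ref{Convexity}) of Lemma~\ref{un} gives $\varphi(X)=\varphi(Y)=\varphi(Z)$, and translating back gives \textbf{X3}. For \textbf{X4}: assume $ex(X)=ex(Y)$; the equivalence gives $\varphi(X)=\varphi(Y)$, and since $(E,\varphi)$ is uniquely generated, Proposition~\ref{UQP} (equation~(\ref{UQ})) gives $\varphi(X\cap Y)=\varphi(X)=\varphi(Y)$, so $ex(X\cap Y)=ex(X)=ex(Y)$.

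The only genuine content is the equivalence $ex(X)=ex(Y)\Leftrightarrow\varphi(X)=\varphi(Y)$, and that is precisely what the preceding paragraph argues: unique generation forces each $\varphi$-class to be the interval $[ex(A),\varphi(A)]$, on which $ex$ is constant and equal to the lower endpoint, so $ex$-classes and $\varphi$-classes coincide. I expect the main (indeed only) obstacle to be making sure this equivalence is stated cleanly rather than leaving it implicit — everything after that is a mechanical relabeling of Lemma~\ref{un} and Proposition~\ref{UQP}. A short proof would therefore read: ``By the discussion above, $[X]_{\varphi}=[X]_{ex}$ for every $X\subseteq E$; in particular $ex(X)=ex(Y)$ iff $\varphi(X)=\varphi(Y)$. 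Now \textbf{X1} follows since $ex(X)\in[X]_{ex}$; \textbf{X2} from~(\ref{Union}); \textbf{X3} from~(\ref{Convexity}); and \textbf{X4} from~(\ref{UQ}), each combined with the stated equivalence.''
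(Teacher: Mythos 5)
Your proof is correct and follows essentially the paper's own route: the paper derives the proposition "immediately" from the coincidence $[X]_{\varphi}=[X]_{ex}$ established in the preceding paragraph, exactly the equivalence you isolate, with \textbf{X1}--\textbf{X4} then inherited from the interval structure, Lemma~\ref{un}, and Proposition~\ref{UQP}. You merely spell out the mechanical transfers that the paper leaves implicit.
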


If $(E,\varphi )$ is not a uniquely generated violator space, then the
operator $ex$ may or may not satisfy the properties \textbf{X1}-\textbf{X4}.
Consider the two following examples.

\begin{example}
Let $E=\{1,2,3\}$. Define $\varphi(X)=X$ for each $X\subseteq E$ except $%
\varphi(\{2\})=\varphi(\{3\})=\{2,3\}$ and $\varphi(\{1,2\})=\varphi(\{1,3%
\})=\{1,2,3\}$. It is easy to check that $(E,\varphi)$ is a violator space
and the operator $ex$ satisfies \textbf{X1},\textbf{X2}, and \textbf{X4},
but while $ex(\{1\})=ex(\{1,2,3\})=\{1\}$, $ex(\{1\}) \neq ex(\{1,2\})$,
i.e., the operator $ex$ is not convex.
\end{example}

\begin{example}
Let $E=\{1,2,3,4,5,6\}$. Define $\varphi (X)=X$ for each $X\subseteq E$
except $\varphi (\{1\})=\{1,2\}$, $\varphi (\{1,2,3\})=\varphi
(\{1,2,4\})=\{1,2,3,4\}$ and $\varphi (\{1,2,5\})=\varphi
(\{1,2,6\})=\{1,2,5,6\}$. It is easy to check that $(E,\varphi )$ is a
violator space. In addition, $ex(\{1,2,3,4\})=ex(\{1,2,5,6\})=\{1,2\}$,
while $ex(\{1,2\})=\{1\}$. Hence, $ex$ is not idempotent (\textbf{X1}) and
does not satisfy \textbf{X4}. Since $ex(\{1,2,3,4,5,6\})=\{1,2,3,4,5,6\}$
the operator $ex$ does not satisfy \textbf{X2} as well, but, compared to the
previous example, $ex$ is convex.
\end{example}

\begin{proposition}
\label{outcast_u} Let $(E,\varphi)$ be a violator space. The following
assertions are equivalent:

(i) $(E,\varphi)$ is uniquely generated

(ii) \textbf{X5}: $(ex(X) \subseteq Y \subseteq X) \Rightarrow ex(X)=ex(Y)$
(the outcast property)

(iii) \textbf{X6}: $\varphi(ex(X))=\varphi(X)$

(iv) \textbf{X7}: $ex(\varphi(X))=ex(X)$
\end{proposition}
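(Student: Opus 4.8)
The plan is to establish $(i)\Rightarrow(ii)\Rightarrow(iii)$ and $(i)\Rightarrow(iv)\Rightarrow(iii)$, noting that $(iii)\Rightarrow(i)$ is already contained in Theorem~\ref{th2} (which asserts precisely that $(E,\varphi)$ is uniquely generated iff $\varphi(X)=\varphi(ex(X))$ for every $X$); this closes all the loops, so the four statements become equivalent. The common device is the following observation, which I would prove first: for any $S\subseteq E$, choosing an inclusion-minimal $G\subseteq S$ with $\varphi(G)=\varphi(S)$ (such a $G$ exists because $S$ itself qualifies and $E$ is finite), one has $ex(G)=G$, and, when $S=X$, also $ex(X)\subseteq G$.

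To see $ex(G)=G$, fix $x\in G$. Minimality gives $\varphi(G-x)\neq\varphi(S)=\varphi(G)$. If $x\in\varphi(G-x)$, then by extensivity $G-x\subseteq G=(G-x)\cup\{x\}\subseteq\varphi(G-x)$, so self-convexity \textbf{V2} would force $\varphi(G-x)=\varphi(G)$, a contradiction; hence $x\notin\varphi(G-x)$, i.e.\ $x\in ex(G)$. Thus $G\subseteq ex(G)$, and since always $ex(G)\subseteq G$ we get $ex(G)=G$. The containment $ex(X)\subseteq G$ is immediate from $ex(X)=\bigcap\{B\subseteq X:\varphi(B)=\varphi(X)\}$ (Proposition~\ref{exp}), as $G$ is one of the sets intersected.

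Now $(i)\Rightarrow(ii)$: if $(E,\varphi)$ is uniquely generated then $\varphi(ex(X))=\varphi(X)$ by Theorem~\ref{th2}, so for any $Y$ with $ex(X)\subseteq Y\subseteq X$ we have $ex(X)\subseteq Y\subseteq X\subseteq\varphi(X)=\varphi(ex(X))$, whence $\varphi(Y)=\varphi(ex(X))=\varphi(X)$ by \textbf{V2}; thus $X$ and $Y$ are cospanning and, by Corollary~\ref{cor_ex}, have the same unique basis, i.e.\ $ex(X)=ex(Y)$. For $(ii)\Rightarrow(iii)$: take an inclusion-minimal $G\subseteq X$ with $\varphi(G)=\varphi(X)$; by the observation $ex(X)\subseteq G\subseteq X$ and $ex(G)=G$, so \textbf{X5} gives $ex(X)=ex(G)=G$, and hence $\varphi(ex(X))=\varphi(G)=\varphi(X)$.

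Finally $(i)\Rightarrow(iv)$: $\varphi(X)$ is cospanning with $X$ because $\varphi$ is idempotent, so by Corollary~\ref{cor_ex} these two sets have the same unique basis, i.e.\ $ex(\varphi(X))=ex(X)$. For $(iv)\Rightarrow(iii)$: take an inclusion-minimal $G\subseteq\varphi(X)$ with $\varphi(G)=\varphi(\varphi(X))=\varphi(X)$; by the observation $ex(G)=G$, and applying \textbf{X7} to $G$ and to $X$ gives $G=ex(G)=ex(\varphi(G))=ex(\varphi(X))=ex(X)$, so $\varphi(ex(X))=\varphi(G)=\varphi(X)$. The only point needing care is that a violator operator need not be monotone, so $\varphi(ex(X))$ cannot be compared with $\varphi(X)$ by mere inclusion; interposing a minimal generator $G$ with $ex(G)=G$ and $\varphi(G)=\varphi(X)$ is exactly the manoeuvre that sidesteps this, and the hypotheses \textbf{X5} and \textbf{X7} are each used only to upgrade the containment $ex(X)\subseteq G$ to the equality $ex(X)=G$.
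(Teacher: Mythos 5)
Your proof is correct, but it closes the cycle of implications differently from the paper. The paper obtains (i) $\Rightarrow$ (ii),(iii),(iv) in one stroke from the interval description $[X]_{\varphi}=[ex(X),\varphi(X)]$ of the cospanning classes, and then proves (ii) $\Rightarrow$ (i) and (iv) $\Rightarrow$ (i) by contraposition: a set with two distinct bases $B_{1}\neq B_{2}$ (together with the facts $ex(B)=B$ for bases, $\varphi(B_{1})=\varphi(B_{2})=\varphi(B_{1}\cup B_{2})$, and $ex(B_{1}\cup B_{2})\subseteq B_{1}\cap B_{2}$ from Proposition \ref{exp}) violates \textbf{X5}, respectively \textbf{X7}; only (iii) $\Rightarrow$ (i) is delegated to Theorem \ref{th2}. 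You instead avoid the two-bases contradiction entirely: you prove \textbf{X5} $\Rightarrow$ \textbf{X6} and \textbf{X7} $\Rightarrow$ \textbf{X6} directly, by interposing an inclusion-minimal $G\subseteq X$ (resp.\ $G\subseteq\varphi(X)$) with $\varphi(G)=\varphi(X)$, for which you verify $ex(G)=G$ via extensivity and \textbf{V2} (your verification is essentially the argument behind the paper's unproved remark that $ex(B)=B$ for every basis, adapted to relative minimality), and $ex(X)\subseteq G$ via Proposition \ref{exp}; then \textbf{X5}, resp.\ \textbf{X7}, upgrades $ex(X)\subseteq G$ to $ex(X)=G$, and Theorem \ref{th2} handles (iii) $\Rightarrow$ (i). Your forward implications (i) $\Rightarrow$ (ii) and (i) $\Rightarrow$ (iv) also bypass the interval/hypercube discussion, using only Theorem \ref{th2}, self-convexity, idempotence and Corollary \ref{cor_ex}. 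The trade-off: the paper's contrapositive arguments are shorter and make the failure of \textbf{X5}/\textbf{X7} in non-uniquely-generated spaces explicit, while your route is constructive, self-contained modulo the quoted results, and correctly flags the non-monotonicity issue that makes the intermediate generator $G$ necessary.
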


\begin{proof}
If $(E,\varphi)$ is a uniquely generated violator space, then operator $ex$
satisfies \textbf{X5},\textbf{X6} and \textbf{X7}, since $%
[X]_{\varphi}=[X]_{ex}=[ex(X),\varphi(X)]$.

Before we continue with the proof, it is important to mention that from the
definition of the operator $ex$ it follows that $ex(B)=B$ for each basis $B$.

Further we prove that if a violator space $(E,\varphi )$ satisfies the
property \textbf{X5}, then it is uniquely generated. Suppose that there is a
set $X\subseteq E$ with two bases $B_{1}$ and $B_{2}$. Then $\varphi
(X)=\varphi (B_{1})=\varphi (B_{2})=\varphi (B_{1}\cup B_{2})$. Thus
Proposition \ref{exp} implies $ex(B_{1}\cup B_{2})\subseteq B_{1}\cap B_{2}$%
. Then we have $ex(B_{1}\cup B_{2})\subseteq B_{1}\subseteq B_{1}\cup B_{2}$
and $ex(B_{1}\cup B_{2})\subseteq B_{2}\subseteq B_{1}\cup B_{2}$, but $%
ex(B_{1})=B_{1}\neq ex(B_{2})=B_{2}$. In other words, we see that $ex$ does
not satisfy the outcast property.

$(iii)\Rightarrow (i)$ follows from Theorem \ref{th2}.

Now, it is only left to prove that if a violator space $(E,\varphi )$
satisfies the property \textbf{X7}, then it is uniquely generated. Suppose
there is a set $X\subseteq E$ with two bases $B_{1}\neq B_{2}$. Then $%
\varphi (X)=\varphi (B_{1})=\varphi (B_{2})$, and so $ex(\varphi
(B_{1}))=ex(\varphi (B_{2}))$. Since $ex(B_{1})=B_{1}\neq ex(B_{2})=B_{2}$,
we conclude that the property \textbf{X7} does not hold.
\end{proof}

It is worth reminding that \textbf{X6} and \textbf{X7} are called
Krein-Milman properties. In other words, every uniquely generated violator
space is a Krein-Milman space \cite{KM}.

\section{Conclusion}

Many combinatorial structures are described using operators defined on their
ground sets. For instance, closure spaces are defined by closure operators,
and violator spaces are described by violator operators. In this paper, we
introduced co-violator spaces based on contracting operators known also as
choice functions. Cospanning characterizations of violator spaces allowed us
to obtain some new properties of violator operators, co-violator operators
and their interconnections. In further research, our intent is to extend
this "cospanning" approach to a wider spectrum of combinatorial structures
closure spaces, convex geometries, antimatroids, etc.

\end{document}